\title
[Infinitesimal generators ...]
{
Infinitesimal generators \\ of invertible evolution families
}
\author{Yoritaka Iwata}
\address[Yoritaka Iwata]{ 
Institute of Innovative Research, Tokyo Institute of Technology;  \linebreak
Department of Mathematics, Shibaura Institute of Technology.}
\email{iwata\_phys@08.alumni.u-tokyo.ac.jp}
\thanks{The author is grateful to Prof. Emeritus Hiroki Tanabe for valuable comments.}
\keywords{invertible evolution family, operator theory, maximal regularity}
\subjclass[2000]{47D03; 35A20; 34K30}
\theoremstyle{plain}
\newtheorem{theorem}{Theorem}[section]
\newtheorem{corollary}[theorem]{Corollary}
\newtheorem{lemma}[theorem]{Lemma}
\begin{document}

\begin{abstract}
A logarithm representation of operators is introduced as well as a concept of pre-infinitesimal generator.
Generators of invertible evolution families are represented by the logarithm representation, and a set of operators represented by the logarithm is shown to be associated with analytic semigroups.
Consequently generally-unbounded infinitesimal generators of invertible evolution families are characterized by a convergent power series representation.
\end{abstract}

\maketitle
\section{Introduction}  \label{sec1}
The logarithm of an injective sectorial operator was introduced by Nollau~\cite{69nollau} in 1969.
After a long time, the logarithm of sectorial operators were studied again from 1990's \cite{boyadzhiev,00okazawa-01, 00okazawa-02}, and its utility was established with respect to the definition of the logarithms of operators~\cite{03hasse,martinez} (for a review of sectorial operators, see Hasse \cite{06hasse}).
While the sectorial operator has been a generic framework to define the logarithm of operators, the sectorial property is not generally satisfied by the evolution family of operators, where the evolution family correspond to the exponentials of operators in abstract Banach space framework (for the definition of evolution family in this paper, see Sec.~\ref{secevolv}).

In this paper we characterize infinitesimal generators of invertible evolution families that has not been settled so far.
First of all, by introducing a kind of similarity transform, a logarithm representation is obtained for such generators.
The logarithm representation is utilized to show a convergent power series representation of invertible evolution families generated by certain unbounded operators, although the validity of such a representation is not established for any evolution families generated by unbounded operators. 
In this context, the concept of pre-infinitesimal generator is introduced.

\section{Mathematical settings} \label{tp-group}
\subsection{Evolution family on Banach spaces} \label{secevolv}
Let $X$ and $B(X)$ be a Banach space with a norm $\| \cdot \|$ and a space of bounded linear operators on $X$.
The same notation is used for the norm equipped with $B(X)$, if there is no ambiguity.

 For a positive and finite $T$, let elements of evolution family $\{U(t,s) \}_{-T \le t, s  \le T}$ be mappings: $(t,s) \to U(t,s)$ satisfying the strong continuity for $-T \le t, s  \le T$ (for reviews or textbooks, see \cite{02arendt,66Kato,72krein,83pazy,79tanabe}). 
The semigroup properties:
\begin{equation} \label{sg1} 
U(t,r)~ U(r,s) = U(t,s), 
\end{equation}
and
\begin{equation}  \label{sg2} 
U(s,s) = I, 
\end{equation}
are assumed to be satisfied, where $I$ denotes the identity operator of $X$.
Both $U(t,s)$ and $U(s,t)$ are assumed to be well-defined to satisfy 
\begin{equation} \label{sg3}
U(s,t) ~ U(t,s) = U(s,s) = I, 
\end{equation}
where $U(s,t)$ corresponds to the inverse operator of $U(t,s)$. 
Since $U(t,s) ~ U(s,t) = U(t,t) = I$ is also true, the commutation between $U(t,s)$ and $U(s,t)$ follows. 
Operator $U(t,s)$ is a generalization of exponential function; indeed the properties shown in Eqs.~\eqref{sg1}-\eqref{sg3} are satisfied by taking $U(t,s)$ as $e^{t-s}$.
Evolution family is an abstract concept of exponential function valid for both finite and infinite dimensional Banach spaces.

Let $Y$ be a dense Banach subspace of $X$, and the topology of $Y$ be stronger than that of $X$.
The space $Y$ is assumed to be $U(t,s)$-invariant.
Following the definition of $C_0$-(semi)group (cf. the assumption $H_2$ in Sec.~5.3 of Pazy~\cite{83pazy} or corresponding discussion in Kato~\cite{70kato,73kato}), $U(t,s)$ is assumed to satisfy the boundedness \cite{66Kato,83pazy}; there exist real numbers $M$ and $\beta$ such that
\begin{equation} \label{qb} 
\| U(t,s)  \|_{B(X)} \le M e^{\beta t},
\quad
\| U(t,s)  \|_{B(Y)} \le M e^{\beta t}.
\end{equation}
Inequalities \eqref{qb} are practically reduced to
\[ \| U(t,s)  \|_{B(X)} \le M e^{\beta T}, \quad \| U(t,s)  \|_{B(Y)} \le M e^{\beta T},  \]
when $t$-interval is restricted to be finite $[-T,T]$.

\subsection{Pre-infinitesimal generator}
The counterpart of the logarithm in the abstract framework is introduced.
There are two concepts associated with the logarithm of operators; one is the pre-infinitesimal generator and the other is $t$-differential of $U(t,s)$.
For $-T \le t, s  \le T$, the weak limit
\begin{equation}  \label{weakdef} \begin{array}{ll}
 \mathop{\rm wlim}\limits_{h \to 0} h^{-1} (U(t+h,s) - U(t,s)) ~u 
= \mathop{\rm wlim}\limits_{h \to 0}   h^{-1}(U(t+h,t) - I) ~ U(t,s) ~u,  
\end{array} \end{equation}
is assumed to exist for $u$, which is an element of a dense subspace $Y$ of $X$.
A linear operator $A(t): Y  ~\to~  X$ is defined by
\begin{equation} \label{pe-group}
A(t) u := \mathop{\rm wlim}\limits_{h \to 0}  h^{-1} (U(t+h,t) - I) u
\end{equation}
for $u \in Y$ and $-T \le t, s  \le T$, and then let $t$-differential of $U(t,s)$ in a weak sense be
\begin{equation} \label{de-group} \begin{array}{ll} 
\partial_t U(t,s)~u = A(t) U(t,s) ~u.
 \end{array}  \end{equation}
Equation~\eqref{de-group} is regarded as a differential equation
satisfied by $U(t,s) u$ that implies a relation between $A(t)$ and the logarithm:
\[ \begin{array}{ll}
A(t)  = \partial_t U(t,s) ~ U(s,t).
\end{array} \]
Let us call $A(t)$ defined by Eq.~\eqref{pe-group} for a whole family $\{U(t,s)\}_{-T \le t,s \le T}$ the pre-infinitesimal generator. 
Note that pre-infinitesimal generators are not necessarily infinitesimal generators; e.g., in $t$-independent cases, $A$ defined by  Eq.~\eqref{pe-group} is not necessarily a densely-defined and closed linear operator, while $A$ must be a densely-defined and closed linear operator with its
resolvent set included in $\{\lambda \in {\mathbb C}: {\rm Re} \lambda > \beta
\}$.
On the other hand, infinitesimal generators are necessarily pre-infinitesimal generators.
In the following a set of pre-infinitesimal generators is denoted by $G(X)$.

\subsection{A principal branch of logarithm}
The logarithm is defined by the Dunford integral in this paper.
Two difficulties of dealing with logarithm are its singularity at the origin and its multi-valued property.
By introducing a constant $\kappa \in {\mathbb C}$, the singularity can be handled.
Let $\arg$ be a function of complex number, which gives the angle between the positive real axis to the line including the point to the origin.
For the multi-valued property, a principle branch  (denoted by ``Log") of the logarithm (denoted by ``$ \log$") is chosen for any complex number $z \in C$, a  branch of logarithm is defined by
\[ \begin{array}{ll}
 {\rm Log} z  = \log |z| + i \arg Z,  
\end{array} \]
where $Z$ is a complex number chosen to satisfy $|Z| = |z|$, $-\pi < \arg Z \le \pi$, and $\arg Z = \arg z + 2 n \pi$ for a certain integer $n$. 

\begin{lemma}
  \label{lem3}
Let $t$ and $s$ satisfy $0 \le t,s \le T$.
For a given $U(t,s)$ defined in Sec.~\ref{tp-group}, its logarithm is well defined; there exists a certain complex number $\kappa$ satisfying
\begin{equation}
\label{logex3} \begin{array}{ll}
{\rm Log} (U(t,s)+\kappa I) = \frac{1}{2 \pi i} \int_{\Gamma} {\rm Log} \lambda  
 ~ ( \lambda - U(t,s) - \kappa )^{-1}  d \lambda,
\end{array} \end{equation}
where an integral path $\Gamma$, which excludes the origin, is a circle
in the resolvent set of $U(t,s) +\kappa I$.
Here $\Gamma$ is independent of $t$ and $s$. 
${\rm Log} (U(t,s)+ \kappa I)$ is bounded on $X$.
\end{lemma}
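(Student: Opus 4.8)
The plan is to invoke the holomorphic (Dunford) functional calculus, the only real work being to exhibit a single shift $\kappa$ and a single contour $\Gamma$ that are admissible simultaneously for every pair $(t,s)$. First I would use the uniform norm bound \eqref{qb}: for $0 \le t,s \le T$ one has $\|U(t,s)\|_{B(X)} \le R$ with $R := M e^{\beta T}$, and hence the spectral radius estimate $r(U(t,s)) \le \|U(t,s)\|$ gives $\sigma(U(t,s)) \subset \{z \in \mathbb{C} : |z| \le R\}$ for all admissible $t,s$. Consequently $\sigma(U(t,s)+\kappa I) = \kappa + \sigma(U(t,s))$ is contained in the closed disk of radius $R$ centred at $\kappa$.

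Next I would fix the shift and the contour. Choosing $\kappa$ to be real with $\kappa > R$, the closed disk $\{z : |z-\kappa| \le \rho\}$ lies in the open right half-plane $\{\mathrm{Re}\, z > 0\}$ whenever $\rho < \kappa$; on this half-plane the principal branch $\mathrm{Log}$ is holomorphic and single-valued, so in particular the branch cut on the negative real axis and the singularity at the origin are avoided. I would then pick any radius $\rho$ with $R < \rho < \kappa$ and let $\Gamma$ be the circle $|\lambda - \kappa| = \rho$, traversed once counterclockwise. By construction $\Gamma$ encloses $\sigma(U(t,s)+\kappa I)$ for every $(t,s)$ and lies in the domain of holomorphy of $\mathrm{Log}$; since $R$ is uniform in $t,s$, both $\kappa$ and $\Gamma$ are independent of $t$ and $s$, as required. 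The right-hand side of \eqref{logex3} is then exactly the Dunford integral defining $\mathrm{Log}(U(t,s)+\kappa I)$, and its independence of the admissible choice of $\rho$ follows from Cauchy's theorem together with holomorphy of the resolvent on the annulus $R < |\lambda - \kappa| < \kappa$.

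For boundedness I would estimate the resolvent on $\Gamma$. For $\lambda \in \Gamma$ one has $|\lambda - \kappa| = \rho > R \ge \|U(t,s)\|$, so the Neumann series yields
\[
\bigl\| (\lambda - U(t,s) - \kappa)^{-1} \bigr\|_{B(X)}
= \Bigl\| \tfrac{1}{\lambda-\kappa}\bigl(I - \tfrac{U(t,s)}{\lambda-\kappa}\bigr)^{-1}\Bigr\|_{B(X)}
\le \frac{1}{\rho - R},
\]
a bound uniform in both $\lambda \in \Gamma$ and $(t,s)$. Since $\mathrm{Log}$ is continuous, hence bounded, on the compact set $\Gamma$, the integral in \eqref{logex3} converges in $B(X)$ and satisfies
\[
\bigl\| \mathrm{Log}(U(t,s)+\kappa I) \bigr\|_{B(X)}
\le \frac{\rho}{\rho - R}\, \sup_{\lambda \in \Gamma} |\mathrm{Log}\,\lambda|,
\]
again uniformly in $(t,s)$, which proves boundedness on $X$.

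The step I expect to require the most care is the uniformity: the whole construction hinges on the single bound $R = M e^{\beta T}$ controlling $\sigma(U(t,s))$ simultaneously for all $t,s$, which is precisely what lets one $\kappa$ and one $\Gamma$ serve the entire family. Once that uniform localization of the spectrum inside a disk disjoint from the branch cut is secured, the remaining verifications---holomorphy of $\mathrm{Log}$ on the enclosed region, convergence of the Dunford integral, and the resolvent estimate---are the standard ingredients of the holomorphic functional calculus.
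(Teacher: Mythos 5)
Your proposal is correct and follows essentially the same route as the paper: use the uniform bound $\|U(t,s)\|_{B(X)} \le M e^{\beta T}$ from \eqref{qb} to localize the spectrum in a fixed disk, shift by $\kappa$ with $|\kappa| > M e^{\beta T}$ so that one contour $\Gamma$ in the resolvent set, avoiding the origin, works for all $(t,s)$, and then define \eqref{logex3} by the Dunford--Riesz integral, with boundedness following from the resolvent estimate on $\Gamma$. Your write-up is in fact somewhat more careful than the paper's, since by taking $\kappa$ real with $R < \rho < \kappa$ you place the enclosed disk in the open right half-plane and thereby explicitly avoid the branch cut of ${\rm Log}$ on the negative real axis (a point the paper's proof leaves implicit, mentioning only the singularity at the origin), and you supply the explicit Neumann-series bound that makes the boundedness claim quantitative.
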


\begin{proof}
The logarithm ${\rm Log}$ holds the singularity at the origin, so that it is necessary to show a possibility of taking a simple closed curve (integral path) excluding the origin in order to define the logarithm by means of the Dunford-Riesz integral. 
It is not generally possible to take such a path in case of $\kappa =0$.

First, $U(t,s)$ is assumed to be bounded for $0 \le t,s \le T$ (Eq.~\eqref{qb}), and the spectral set of $U(t,s)$ is a bounded set in ${\mathbb C}$.
Second, for $\kappa$ satisfying
\begin{equation} \label{cr-cond} |\kappa| > M e^{\beta T}, \end{equation}
the spectral set of $U(t,s) + \kappa I$ is separated with the origin.
Consequently it is possible to take an integral path $\Gamma$ including the spectral set of $U(t,s)+\kappa I$ and excluding the origin.
Equation~\eqref{logex3} follows from the Dunford-Riesz integral~\cite{43dunford}.
Furthermore, by adjusting the amplitude of $\kappa$, an appropriate integral path always exists independent of $t$ and $s$.

${\rm Log} (U(t,s)+\kappa I)$ is bounded on $X$, since $\Gamma$ is included in the resolvent set of $(U(t,s)+\kappa I)$.
\end{proof}

According to this lemma, by introducing nonzero $\kappa$, the logarithm
of $U(t,s)+\kappa I$ is well-defined without assuming the sectorial property to $U(t,s)$.
On the other hand Eq.~\eqref{logex3} is valid with $\kappa=0$ only for limited cases.

\section{Main results}
\subsection{Logarithm representation of pre-infinitesimal generator}
\begin{theorem}
\label{thm1}
Let $t$ and $s$ satisfy $-T \le t,s \le T$, and $Y$ be a dense subspace of $X$.
For $U(t,s)$ defined in Sec.~\ref{tp-group}, let $A(t) \in G(X)$ and $\partial_t U(t,s)$ be determined by Eqs.~\eqref{pe-group} and \eqref{de-group} respectively.  
If $A(t)$ and $U(t,s)$ commute, an evolution family $\{ A(t) \}_{-T \le t \le T}$ is represented by means of the logarithm function; there exists a certain complex number $\kappa \ne 0$ such that
\begin{equation} \label{logex} \begin{array}{ll}
 A(t) ~ u =  (I+ \kappa U(s,t))~ \partial_{t} {\rm Log} ~ (U(t,s) + \kappa I) ~ u, 
\end{array} \end{equation}
where $u$ is an element in $Y$.
Note that $U(t,s)$ defined in Sec.~\ref{tp-group} is assumed to be invertible.
\end{theorem}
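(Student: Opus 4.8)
The plan is to obtain \eqref{logex} by differentiating the Dunford--Riesz integral of Lemma~\ref{lem3} with respect to $t$ and then solving for $A(t)$. Abbreviate $B(t):=U(t,s)+\kappa I$, so that by Lemma~\ref{lem3} one has ${\rm Log}\,B(t)=\frac{1}{2\pi i}\int_\Gamma {\rm Log}\,\lambda\,(\lambda-B(t))^{-1}\,d\lambda$ with the circle $\Gamma$ chosen once and for all (excluding the origin) independently of $t$. First I would differentiate this representation under the integral sign, applied to a vector $u\in Y$, using the resolvent differentiation identity $\partial_t(\lambda-B(t))^{-1}=(\lambda-B(t))^{-1}(\partial_t B(t))(\lambda-B(t))^{-1}$ together with $\partial_t B(t)=\partial_t U(t,s)=A(t)U(t,s)$ from \eqref{de-group}.

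Next, since $A(t)$ commutes with $U(t,s)$ by hypothesis, it commutes with $B(t)$ and hence with the resolvent $(\lambda-B(t))^{-1}$, which is a norm limit of polynomials in $U(t,s)$; this lets $A(t)$ be pulled out of the integral. Writing the surviving factor $U(t,s)=B(t)-\kappa I$ and using that it too commutes with the resolvent, the integrand becomes ${\rm Log}\,\lambda\,(B(t)-\kappa I)(\lambda-B(t))^{-2}$. Because $(\lambda-B)^{-2}=-\partial_\lambda(\lambda-B)^{-1}$, an integration by parts along the closed contour $\Gamma$ (on which ${\rm Log}$ is single-valued and holomorphic) converts $\frac{1}{2\pi i}\int_\Gamma {\rm Log}\,\lambda\,(\lambda-B)^{-2}\,d\lambda$ into $\frac{1}{2\pi i}\int_\Gamma \lambda^{-1}(\lambda-B)^{-1}\,d\lambda=B(t)^{-1}$ by the holomorphic functional calculus. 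Collecting the factors gives $\partial_t{\rm Log}(U(t,s)+\kappa I)\,u=A(t)\,U(t,s)\,(U(t,s)+\kappa I)^{-1}u$.

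It then remains to multiply on the left by $I+\kappa U(s,t)$. Since $U(s,t)=U(t,s)^{-1}$, one has $I+\kappa U(s,t)=(U(t,s)+\kappa I)\,U(t,s)^{-1}$, and commutativity of $A(t)$ with $U(t,s)$ lets the bounded factors telescope, $(U(t,s)+\kappa I)U(t,s)^{-1}\cdot A(t)U(t,s)(U(t,s)+\kappa I)^{-1}u=A(t)u$, which is precisely \eqref{logex}. The choice $|\kappa|>Me^{\beta T}$ supplied by Lemma~\ref{lem3} guarantees that $B(t)$ is invertible and that $0\notin\Gamma$, so every inverse and the branch ${\rm Log}$ appearing above are well defined, and in particular $\kappa\neq0$.

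The step I expect to be the main obstacle is the rigorous justification of the differentiation under the integral sign. Because $U(t,s)$ is only strongly continuous and is differentiable merely in the weak sense of \eqref{weakdef} on the dense subspace $Y$, the derivative cannot be taken in operator norm and must be performed on vectors $u\in Y$; this forces one to check that the resolvent $(\lambda-B(t))^{-1}$ leaves $Y$ invariant, so that $\partial_t U(t,s)$ may legitimately be applied to $(\lambda-B(t))^{-1}u$, and to handle with care the extraction of the generally unbounded operator $A(t)$ from the integral. The $U(t,s)$-invariance of $Y$ and the uniform bounds \eqref{qb} are exactly the ingredients that render these weak-limit and interchange-of-limit manipulations admissible.
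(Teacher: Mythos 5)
Your proposal follows essentially the same route as the paper's own proof: differentiate the Dunford--Riesz integral of Lemma~\ref{lem3} on vectors $u\in Y$, use the commutation hypothesis to extract $A(t)$, reduce the remaining contour integral by integration by parts to $\frac{1}{2\pi i}\int_\Gamma \lambda^{-1}(\lambda-B(t))^{-1}\,d\lambda$, and invert $I-\kappa(U(t,s)+\kappa I)^{-1}$ to obtain \eqref{logex}; your evaluation of that integral as $B(t)^{-1}$ by the holomorphic functional calculus is just a cleaner packaging of the paper's explicit splitting $U=(\lambda-\kappa)-(\lambda-U-\kappa)$ and Neumann-series computation. You correctly flag the interchange of the weak limit with the integral as the delicate step and name the right ingredients (resolvent bounds on $X$ and $Y$, $Y$-invariance, the bounds \eqref{qb}); the paper settles it exactly this way, writing the difference quotient as $\frac{1}{h}\int_t^{t+h}A(\tau)U(\tau,s)(\lambda-U(t,s)-\kappa)^{-1}u\,d\tau$ and using the assumed finiteness of $\|A(\tau)U(\tau,s)\|_{B(Y,X)}$ to get uniform boundedness in $h$.
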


\begin{proof}
For $U(t,s)$ defined in Sec.~\ref{tp-group}, operators $ {\rm Log} ~ (U(t,s) + \kappa I)$ and $ {\rm Log} ~ (U(t+h,s) + \kappa I)$ are well defined for a certain $\kappa$ (Lemma~\ref{lem3}).
The $t$-differential in a weak sense is formally written by
\begin{equation} \label{difference0} \begin{array} {ll} 
\mathop{\rm wlim}\limits_{h \to 0}  \frac{1}{h} \{ {\rm Log} ~(U(t+h,s)+\kappa I) - {\rm Log} ~(U(t,s)+ \kappa I) \}   \vspace{1.5mm} \\
  =\mathop{\rm wlim}\limits_{h \to 0}   \frac{1}{h} \frac{1}{2 \pi i}
 \int_{\Gamma} {\rm Log} \lambda   \vspace{1.5mm} \\ 
 ~ \{ ( \lambda - U(t+h,s) - \kappa )^{-1}   
  -  ( \lambda - U(t,s) - \kappa )^{-1}  \} d \lambda  \vspace{1.5mm} \\  
 = \mathop{\rm wlim}\limits_{h \to 0}  \frac{1}{2 \pi i}
 \int_{\Gamma} {\rm Log} \lambda   \vspace{1.5mm} \\ 
 ~ \{ (\lambda - U(t+h,s)-\kappa )^{-1} \frac{U(t+h,s)-U(t,s)}{h} (\lambda - U(t,s)- \kappa )^{-1} \} d \lambda   
\end{array} \end{equation} 
where $\Gamma$, which is possible to be taken independent of $t$, $s$ and $h$ for a sufficiently large certain $\kappa$, denotes a circle in the resolvent set of both $U(t,s)+ \kappa I$ and $U(t+h,s)+\kappa I$.
A part of the integrand of Eq.~\eqref{difference0} is estimated as
\begin{equation} \label{intee} \begin{array} {ll}
\quad \| \{ (\lambda - U(t+h,s)-\kappa )^{-1} \frac{U(t+h,s)-U(t,s)}{h} (\lambda - U(t,s)- \kappa )^{-1} \} v \|_X \vspace{1.5mm} \\
  \le  \| \{ (\lambda - U(t+h,s)- \kappa )^{-1} \|_{B(X)}  \vspace{1.5mm} \\
 \|  \frac{U(t+h,s)-U(t,s)}{h} (\lambda - U(t,s)- \kappa )^{-1} \} v \|_X, 
\end{array} \end{equation}
for $v \in X$. 
There are two steps to prove the validity of Eq.~\eqref{difference0}.
In the first step, the former part of the right hand side of Eq.~\eqref{intee} satisfies
\[ \begin{array} {ll}
 \| \{ (\lambda - U(t+h,s)- \kappa )^{-1} \|_{B(X)}  < \infty,
\end{array} \]
since $\lambda$ is taken from the resolvent set of $U(t+h,s)- \kappa I$.
In the same way the operator $(\lambda - U(t,s) - \kappa )^{-1}$ is bounded on $X$ and $Y$.
Then the continuity of the mapping $t \to (\lambda - U(t,s)- \kappa )^{-1}$ as for the strong topology follows:
\[ \begin{array} {ll}
 \|  (\lambda - U(t+h,s)- \kappa )^{-1} - (\lambda - U(t,s)- \kappa I)^{-1}  \|_{B(X)}   \vspace{1.5mm} \\
 \le  \| (\lambda - U(t+h,s)- \kappa )^{-1} \|_{B(X)}   
 \|( U(t+h,s)-U(t,s)) (\lambda - U(t,s)-\kappa )^{-1}  \|_{B(X)}.
\end{array} \]
In the second step, the latter part of the right hand side of Eq.~\eqref{intee} is estimated as
\begin{equation} \label{unibound}  \begin{array}{ll}
 \left\| \frac{U(t+h,s)-U(t,s)}{h} (\lambda  - U(t,s)-\kappa )^{-1}  u  \right\|_X \vspace{1.5mm}\\
 = \left\| \frac{1}{h} \int_t^{t+h} A(\tau) U(\tau,s) (\lambda  - U(t,s)- \kappa )^{-1}  u ~ d\tau  \right\|_X \vspace{1.5mm}\\
 \le  \frac{1}{|h|}  \int_t^{t+h} \| A(\tau)  U(\tau,s)\|_{B(Y,X)}
   \| (\lambda - U(t,s)- \kappa )^{-1}\|_{B(Y)} \|u \|_{Y} ~ d\tau 
\end{array} \end{equation}
for $u \in Y$.
Because $\| A(\tau) U(\tau,s) \|_{B(Y,X)}  < \infty $ is true by assumption, the right hand side of Eq.~\eqref{unibound} is finite.  
Equation~\eqref{unibound} shows the uniform boundedness with respect to $h$, then the uniform convergence ($h \to 0$) of Eq.~\eqref{difference0} follows.
Consequently the weak limit process $h \to 0$ for the integrand of Eq. (\ref{difference0}) is justified, as well as the commutation between the limit and the integral.

According to Eq.~\eqref{difference0}, interchange of the limit with the integral leads to
\[ \begin{array} {ll} 
\partial_t {\rm Log} (U(t,s) + \kappa I) ~ u 
=  \frac{1}{2 \pi i} \int_{\Gamma} d\lambda  \vspace{1.5mm} \\    
 ~ \left[ ( {\rm Log} \lambda )  (\lambda - U(t,s)- \kappa )^{-1} 
 ~\mathop{\rm wlim}\limits_{h \to 0} \left ( \frac{U(t+h,s)-U(t,s)}{h} \right)
~ (\lambda  - U(t,s)- \kappa )^{-1}    \right] ~u \\
\end{array} \] 
for $u \in Y$.
Because we are also allowed to interchange $A(t)$ with $U(t,s)$,
\begin{equation} \label{intermed}  \begin{array}{ll}
\partial_t {\rm Log} (U(t,s) + \kappa I) ~ u \vspace{1.5mm}\\
 = \frac{1}{2 \pi i} \int_{\Gamma} ({\rm Log} \lambda)
 (\lambda-U(t,s)-\kappa )^{-1} 
  A(t) ~ U(t,s) ~ (\lambda -U(t,s)- \kappa )^{-1}  d \lambda ~ u \vspace{1.5mm}\\
 = \frac{1}{2 \pi i} \int_{\Gamma} ({\rm Log} \lambda) ~(\lambda-U(t,s) - \kappa )^{-2} ~ U(t,s) ~ d \lambda ~A(t) ~ u
\end{array} \end{equation}
for $u \in Y$.
A part of the right hand side is calculated as
\[ \begin{array}{ll}
 \quad  \frac{1}{2 \pi i} \int_{\Gamma}~ ({\rm Log} \lambda) ~(\lambda-U(t,s) - \kappa )^{-2} U(t,s) ~ d \lambda  \vspace{1.5mm} \\
 = \frac{1}{2 \pi i} \int_{\Gamma} \frac{1}{\lambda}  ~(\lambda-U(t,s)
 - \kappa )^{-1} ~ U(t,s) ~ d \lambda \vspace{1.5mm} \\
 = \frac{1}{2 \pi i} \int_{\Gamma} \frac{1}{\lambda}  ~(\lambda-U(t,s)- \kappa )^{-1}  ~ \{ \lambda- \kappa  -(\lambda - U(t,s) - \kappa ) \} ~ d \lambda \vspace{1.5mm} \\
 = \frac{1}{2 \pi i} \int_{\Gamma} (\lambda-U(t,s)-\kappa )^{-1} ~ d \lambda
 - \frac{1}{2 \pi i} \int_{\Gamma} \frac{\kappa}{\lambda} (\lambda-U(t,s)-\kappa )^{-1} ~
 d \lambda 
 - \frac{1}{2 \pi i} \int_{\Gamma} \frac{1}{\lambda} ~ d \lambda \vspace{1.5mm} \\
 = \frac{1}{2 \pi i} \int_{\Gamma} (\lambda-U(t,s)-\kappa )^{-1} ~ d \lambda
 - \frac{1}{2 \pi i} \int_{\Gamma} \frac{\kappa}{\lambda} (\lambda-U(t,s)-\kappa )^{-1} ~
 d \lambda \vspace{1.5mm} \\
 =  \frac{1}{2 \pi i} \int_{\Gamma} (\lambda-U(t,s)- \kappa )^{-1} ~ d \lambda   \vspace{1.5mm} \\
~ - \kappa (U(t,s)+ \kappa I )^{-1} \left\{  \frac{1}{2 \pi i} \int_{\Gamma}  \frac{1}{\lambda}  (U(t,s)+ \kappa I) (\lambda - U(t,s)-\kappa)^{-1} d \lambda  \right\}  \vspace{1.5mm} \\
 =    \frac{1}{2 \pi i} \int_{\Gamma} (\lambda-U(t,s)-\kappa )^{-1} ~ d \lambda
  \vspace{1.5mm} \\
~ - \kappa (U(t,s)+\kappa I)^{-1} \left\{  \frac{1}{2 \pi i} \int_{\Gamma} (\lambda - U(t,s)-\kappa )^{-1} d \lambda -\frac{1}{2 \pi i}  \int_{\Gamma} \frac{1}{\lambda} d \lambda  \right\}  \vspace{1.5mm} \\
 =   (I - \kappa (U(t,s)+\kappa I)^{-1} ) ~  \frac{1}{2 \pi i} \int_{\Gamma} (\lambda - U(t,s)-\kappa)^{-1} d \lambda   \vspace{1.5mm} \\
 = (I - \kappa (U(t,s)+ \kappa I)^{-1}) ~ \frac{1}{2 \pi i} \int_{|\nu| = r} \sum_{n=1}^{\infty} \frac{U(t,s)^{n}}{\nu^{n+1}} ~ d \nu     \vspace{1.5mm} \\
  = I- \kappa (U(t,s)+ \kappa I)^{-1},  
\end{array} \] 
due to the integration by parts, where $|\lambda - \kappa| = |\nu| =r$ is a properly chosen circle large enough to include $\Gamma$.
$ (2 \pi i)^{-1}  \int_{\Gamma} \lambda^{-1} d \lambda = 0$ is seen by applying $d {\rm Log} \lambda/d \lambda = 1/\lambda$.  
$(2 \pi i)^{-1} \int_{|\nu| = r} \sum_{n=1}^{\infty} U(t,s)^{n} \nu^{-n-1} ~ d \nu  = I$ follows from the singularity of $\nu^{-n-1}$. 

Consequently we have
\[ \begin{array}{ll}
A(t) ~ u = \{I- \kappa (U(t,s)+\kappa I)^{-1}\}^{-1} ~ \partial_{t} {\rm Log} ~ (U(t,s) + \kappa I) ~ u  \vspace{1.5mm} \\
\quad = (U(t,s)+\kappa I) U(t,s)^{-1} ~ \partial_{t} {\rm Log} ~ (U(t,s) + \kappa I) ~ u  \vspace{1.5mm} \\
\quad =  (I+\kappa U(s,t))~ \partial_{t} {\rm Log} ~ (U(t,s) + \kappa I) ~ u 
\end{array} \]
for $u \in Y$.
\end{proof}

What is introduced by Eq.~\eqref{logex} is a kind of resolvent approximation of $A(t)$
\[ \begin{array}{ll}
 \partial_{t} {\rm Log} ~ (U(t,s) + \kappa I) =   (I+\kappa U(s,t))^{-1} A(t), 
\end{array} \]
in which $A(t)$ is approximated by the resolvent of $U(s,t)$.
As seen in the following it is notable that there is no need to take $\kappa \to 0$.
This point is different from the usual treatment of resolvent approximations.
On the other hand, it is also seen by Eq.~\eqref{logex} that
\[
 \partial_{t} {\rm Log} ~ (U(t,s) + \kappa I) =  (U(t,s)+\kappa I )|_{\kappa = 0} A(t)  (U(t,s)+\kappa I)^{-1}     
\]
shows a structure of similarity transform, where $(U(t,s)+\kappa)|_{\kappa=0}$ means $U(t,s)+\kappa$ satisfying a condition $\kappa=0$.

Under the validity of Theorem~\ref{thm1}, for $-T \le t,s \le T$, let $a(t,s)$ be defined by 
\[  \begin{array}{ll}
a(t,s) := {\rm Log} (U(t,s)+\kappa I),
\end{array} \]
then Eq.~\eqref{logex} is written as $A(t)  =  (I+\kappa U(s,t))~ \partial_{t} a(t,s)$.
Since $\kappa$ is chosen to separate the spectral set of $U(t,s)+\kappa I$ from the origin, the inverse operator of
\[ \begin{array}{ll}
(I+ \kappa U(s,t)) = U(s,t) (U(t,s)+\kappa I) 
\end{array} \]
is well defined as $(I+\kappa U(s,t))^{-1} = (U(t,s)+\kappa I)^{-1} U(t,s)$.
It also ensures that $\partial_{t} a(t,s)$ is well defined.

\begin{corollary} \label{transform}
Let $t$ and $s$ satisfy $0 \le t,s \le T$.
For $U(t,s)$ and $A(t)$ satisfying the assumption of Theorem~\ref{thm1}, the exponential of $a(t,s)$ is represented by a convergent power series:
\begin{equation} \label{convp} \begin{array}{ll} 
e^{a(t,s)} = \sum_{n=0}^{\infty} \frac{ a(t,s)^n}{n!}, \end{array} \end{equation}
with a relation $e^{a(t,s)} = \exp({\rm Log} (U(t,s)+\kappa I)) = U(t,s)+\kappa I$. 
If $a(t,s)$ with different $t$ and $s$ are further assumed to commute,
\begin{equation} \label{replce} \begin{array}{ll}
\partial_t  e^{a(t,s)} u_s =  \partial_t a(t,s)  ~   e^{a(t,s)} u_s  \\
\end{array} \end{equation}
is satisfied for $u_s \in Y$, where $\partial_t$ denotes a $t-$differential in a weak sense.
\end{corollary}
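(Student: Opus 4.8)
The plan is to treat the two assertions of the statement separately. For the convergent power series of Eq.~\eqref{convp}, I would first record, via Lemma~\ref{lem3}, that $a(t,s) = {\rm Log}(U(t,s)+\kappa I)$ is a bounded operator on $X$. For any bounded operator the exponential series $\sum_{n=0}^{\infty} a(t,s)^n/n!$ converges absolutely in the operator norm, since $\|a(t,s)^n\| \le \|a(t,s)\|^n$ and $\sum_n \|a(t,s)\|^n/n! = e^{\|a(t,s)\|} < \infty$; this absolute convergence is precisely what defines $e^{a(t,s)}$ and already yields the claimed representation.

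For the identity $e^{a(t,s)} = U(t,s)+\kappa I$, I would appeal to the composition law of the Dunford--Riesz holomorphic functional calculus that was used to define the logarithm in Lemma~\ref{lem3}. Because $\kappa$ is chosen (Eq.~\eqref{cr-cond}) so that the spectrum of $U(t,s)+\kappa I$ is separated from the origin and lies in a region on which the principal branch ${\rm Log}$ is holomorphic, the scalar relation $\exp({\rm Log} z) = z$ holds on a neighborhood of $\sigma(U(t,s)+\kappa I)$. The composition rule of the functional calculus then lifts this scalar identity to the operator identity $\exp({\rm Log}(U(t,s)+\kappa I)) = U(t,s)+\kappa I$.

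For the derivative formula of Eq.~\eqref{replce}, the key algebraic input is that the added commutation hypothesis --- that the operators $a(t,s)$ for different $t$ and $s$ commute --- forces $\partial_t a(t,s)$ to commute with $a(t,s)$: since $a(t+h,s)$ commutes with $a(t,s)$ for every $h$, so does the difference quotient $(a(t+h,s)-a(t,s))/h$, and hence its weak limit $\partial_t a(t,s)$. Granting this, one has $\partial_t(a(t,s)^n) = n\, a(t,s)^{n-1}\partial_t a(t,s)$, and differentiating the series of Eq.~\eqref{convp} term by term produces $\sum_{n \ge 1} a(t,s)^{n-1}\partial_t a(t,s)/(n-1)! = \partial_t a(t,s)\, e^{a(t,s)}$, which is the desired relation.

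The main obstacle is the interchange of the weak $t$-differentiation with the infinite sum. I would justify this exactly as the limit--integral interchange was justified in the proof of Theorem~\ref{thm1}: one shows that the series of termwise derivatives converges uniformly in $h$ near $h=0$, using the uniform boundedness of the difference quotients $(a(t+h,s)-a(t,s))/h$ applied to $u_s \in Y$ (already available, since $\partial_t a(t,s)$ is well defined) together with the factorial decay $1/(n-1)!$ that dominates the tail. Uniform convergence then legitimizes passing $\partial_t$ inside the sum, yielding Eq.~\eqref{replce} after a final use of the commutation property to place $\partial_t a(t,s)$ on either side of $e^{a(t,s)}$.
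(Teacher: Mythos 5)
Your proposal is correct and follows essentially the same route as the paper: boundedness of $a(t,s)$ from Lemma~\ref{lem3} gives the norm-convergent exponential series, the identity $\exp({\rm Log}\,(U(t,s)+\kappa I)) = U(t,s)+\kappa I$ comes from the Dunford--Riesz functional calculus (the paper simply evaluates $\frac{1}{2\pi i}\int_\Gamma e^{{\rm Log}\,\lambda}(\lambda-U(t,s)-\kappa)^{-1}\,d\lambda$, which is your composition rule made explicit), and the commutation hypothesis yields $\partial_t\{a(t,s)\}^n = n\,a(t,s)^{n-1}\,\partial_t a(t,s)$ and hence Eq.~\eqref{replce} by termwise differentiation. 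Your additional care in deriving the commutation of $\partial_t a(t,s)$ with $a(t,s)$ from that of the difference quotients, and in justifying the interchange of $\partial_t$ with the sum via uniform bounds, only makes explicit steps the paper leaves implicit.
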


\begin{proof}
Since $a(t,s)$ is a bounded operator on $X$ (Lemma~\ref{lem3}), the exponential of $a(t,s)$ is represented by a convergent power series \cite{82kato}:
\begin{equation} \label{convp} \begin{array}{ll} 
e^{a(t,s)} = \sum_{n=0}^{\infty} \frac{ a(t,s)^n}{n!}, \end{array} \end{equation}
where 
\[ \begin{array}{ll}
 e^{a(t,s)} = \exp({\rm Log} (U(t,s)+\kappa I)) \vspace{1.5mm} \\
 = \frac{1}{2 \pi i} \int_{\Gamma} e^{{\rm Log} \lambda}   
 ~ ( \lambda - U(t,s) - \kappa)^{-1}  d \lambda \vspace{1.5mm} \\
 =  U(t,s)+\kappa I
\end{array} \]  
is satisfied. 
Since $a(t,s)$ with different $t$ and $s$ commute, 
\[ \begin{array}{ll}
\partial_t \{ a(t,s) \}^n 
 = n \{ a(t,s) \}^{n-1} ~ \partial_t a(t,s)  
\end{array} \] 
leads to
\begin{equation} \label{traeq} \begin{array}{ll}
\partial_t e^{a(t,s)} u_s
 = e^{a(t,s)} ~ (\partial_t a(t,s))  u_s 
\end{array} \end{equation}
for $u_s \in Y$.
This is a linear evolution equation satisfied by $e^{a(t,s)}  u_s$.
\end{proof}

Further calculi on Eq.~\eqref{traeq} lead to
\[ \begin{array}{ll}
\partial_t e^{a(t,s)} u_s
 =   \partial_t (U(t,s)+\kappa I) u_s
 =   \partial_t U(t,s) u_s,
\end{array} \]
and 
\[ \begin{array}{ll}
  e^{a(t,s)} ~ (\partial_t a(t,s) ) u_s 
 = (U(t,s)+\kappa I) \partial_t( {\rm Log} (U(t,s)+\kappa I)) u_s \\ 
  = (U(t,s)+\kappa I)  (U(t,s)+\kappa I)^{-1} U(t,s) A(t) u_s \\
 =  U(t,s) A(t) u_s  =  A(t) U(t,s) u_s,
\end{array} \]
where Theorem~\ref{thm1} is applied.
As a result 
\[  \partial_t U(t,s) u_s =   A(t) U(t,s) u_s \]
is obtained.
Note that $e^{a(t,s)}$ does not satisfy the semigroup property, while $U(t,s)$ satisfies it.
 
\section{Abstract Cauchy problem} 
\subsection{Autonomous case} \label{homosection}
Logarithmic representation is utilized to solve autonomous Cauchy problem 
\begin{equation} \label{homoporo} \left\{  \begin{array}{ll}
\partial_t u(t)  = A(t) u(t) \vspace{2.5mm} \\
u(s) = u_s, 
\end{array} \right. \end{equation} 
in $X$, where $A(t) \in G(X):Y \to X$ is assumed to be an infinitesimal generator of $U(t,s)$, $-T \le t,s \le T$ is satisfied, $Y$ is a dense subspace of $X$ permitting the representation shown in Eq.~\eqref{logex}, and $u_s$ is an element of $X$.

As seen in Eq.~\eqref{replce}, under the assumption of commutation, a related Cauchy problem is obtained as
\begin{equation} \left\{  \begin{array}{ll} \label{reweq}
\partial_t v(t,s)  = (\partial_t a(t,s)) ~ v(t,s) \vspace{2.5mm} \\
v(s,s) = e^{a(s,s)} u_s,
\end{array} \right. \end{equation}
in $X$, where $\partial_t a(t,s) = \partial_t {\rm Log} (U(t,s)+\kappa I)$ is well-defined.
It is possible to solve re-written Cauchy problem, and the solution is represented by
\[  \begin{array}{ll}
v(t,s) =  e^{a(t,s)} u_s  =
 \sum_{n=0}^{\infty} \frac{ a(t,s)^n}{n!} u_s
 \end{array} \]
 for $u_s \in X$  (cf.~Eq.~\eqref{convp}).

\begin{theorem} \label{hols}
Operator $e^{a(t,s)}$ is holomorphic.
\end{theorem}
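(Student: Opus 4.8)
The plan is to reduce the holomorphy of $e^{a(t,s)}$ to a single structural fact supplied by the logarithm representation: the boundedness of the exponent. First I would fix $t,s$ with $0 \le t,s \le T$ and invoke Lemma~\ref{lem3}, which guarantees that for a suitable $\kappa$ the operator $a(t,s) = {\rm Log}(U(t,s)+\kappa I)$ is bounded on $X$. Since the assertion ``$e^{a(t,s)}$ is holomorphic'' is, properly understood, a statement about the semigroup generated by $a(t,s)$, the natural object to examine is the complex one-parameter family $\zeta \mapsto e^{\zeta a(t,s)}$ obtained by promoting the scalar $1$ in $e^{a(t,s)}$ to a complex variable $\zeta$. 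Establishing that this family is holomorphic in $\zeta$ is precisely the claim that the associated semigroup is analytic (holomorphic), and that $e^{a(t,s)} = U(t,s)+\kappa I$ arises as its value at $\zeta = 1$.

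Next I would represent $e^{\zeta a(t,s)}$ by the exponential power series $\sum_{n=0}^{\infty} \zeta^n a(t,s)^n / n!$, exactly as in Eq.~\eqref{convp}. Because $a(t,s) \in B(X)$, the submultiplicativity of the operator norm yields $\| a(t,s)^n \|_{B(X)} \le \| a(t,s) \|_{B(X)}^n$, so the series converges in the operator norm uniformly on every compact subset of $\mathbb{C}$. A locally uniform limit of $B(X)$-valued polynomials in $\zeta$ is again holomorphic, and term-by-term differentiation gives $\frac{d}{d\zeta} e^{\zeta a(t,s)} = a(t,s)\, e^{\zeta a(t,s)}$. Hence $\zeta \mapsto e^{\zeta a(t,s)}$ is an entire $B(X)$-valued function, so the semigroup it generates is holomorphic on the whole complex plane, which is the strongest possible form of analyticity and in particular subsumes the sectorial case advertised in the abstract.

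I do not anticipate a serious technical obstacle, precisely because the logarithm representation of Theorem~\ref{thm1} has already performed the essential reduction: it replaces the generally unbounded generator $A(t)$ by the bounded operator $a(t,s)$, and boundedness of the exponent is exactly the hypothesis under which the exponential series is entire. The only delicate point is interpretive rather than computational, namely making explicit that holomorphy refers to the analyticity of the semigroup $\zeta \mapsto e^{\zeta a(t,s)}$ and emphasizing that this persists even though the underlying evolution family $U(t,s)$ need not generate a holomorphic semigroup and $e^{a(t,s)} = U(t,s)+\kappa I$ itself violates the semigroup property. Should a resolvent-based argument be preferred, I would instead verify the estimate $\| (\lambda - a(t,s))^{-1} \|_{B(X)} \le (|\lambda| - \| a(t,s) \|_{B(X)})^{-1}$ for all $|\lambda| > \| a(t,s) \|_{B(X)}$ via the Neumann series, and then appeal to the standard characterization of analytic semigroups through sectorial resolvent bounds.
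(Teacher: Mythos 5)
Your proof establishes a different statement from the one the theorem is making, and the difference is exactly where the difficulty lies. You fix $(t,s)$, note via Lemma~\ref{lem3} that $a(t,s) \in B(X)$, and conclude that $\zeta \mapsto e^{\zeta a(t,s)}$ is entire. That is correct but trivially so --- it holds for every bounded operator and uses nothing about $U(t,s)$ --- and it is holomorphy in the auxiliary scalar $\zeta$, not in the time variable. The paper's own proof makes clear that Theorem~\ref{hols} asserts holomorphy of $t \mapsto e^{a(t,s)}$: it represents $\partial_t^n e^{a(t,s)}$ by the Dunford integral \eqref{anreap}, imports analytic-semigroup-type estimates $\| \partial_t^n e^{a(t,s)} \| \le C_{\theta,n}/\bigl(\pi (t \sin\theta)^n\bigr)$, deduces \eqref{leiq}, and concludes that the Taylor series $\sum_{n=0}^{\infty} \frac{(z-t)^n}{n!} \partial_t^n e^{a(t,s)}$ converges uniformly in a wider sense for $|z-t| < t\sin\theta$. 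Your argument never engages the $t$-dependence of $a(t,s)$ at all: from Theorem~\ref{thm1} one only knows that $\partial_t a(t,s)$ exists in a weak sense, so infinite $t$-differentiability --- let alone analyticity in $t$ --- is genuinely at stake and cannot be obtained by varying $\zeta$ with the exponent frozen. Your interpretive move, identifying the claim with analyticity of the semigroup generated by $a(t,s)$ at fixed $(t,s)$, is precisely where the proposal fails to meet the theorem.

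The gap is not cosmetic, because the theorem's downstream use depends on the $t$-derivative bounds. Equation~\eqref{leiq}, i.e. $\lim_{t \to +0}\sup t^n \| \partial_t^n e^{a(t,s)} \| < \infty$, is invoked in the proof of Theorem~\ref{thm-inh} to pass to the limit $\epsilon \to 0$ in \eqref{conveq} when the inhomogeneity $f$ is only H\"older continuous; nothing in your argument produces any estimate on $\partial_t^n e^{a(t,s)}$ for $n \ge 1$. The same objection applies to your fallback: the Neumann-series bound $\| (\lambda - a(t,s))^{-1} \|_{B(X)} \le (|\lambda| - \| a(t,s) \|_{B(X)})^{-1}$ again concerns the generator with $(t,s)$ frozen and yields analyticity in $\zeta$ only. (Your norm-convergence observation for the series \eqref{convp} is sound, but that fact is already the content of Corollary~\ref{transform}, not of Theorem~\ref{hols}.) To prove what is actually claimed you would need to control the regularity of $t \mapsto a(t,s) = {\rm Log}\,(U(t,s)+\kappa I)$ --- for instance, justify a representation of $\partial_t^n e^{a(t,s)}$ of the form \eqref{anreap} together with the sector estimates --- which is exactly the nontrivial burden the paper's proof undertakes.
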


\begin{proof}
According to the boundedness of $a(t,s)$ on $X$ (Lemma~\ref{lem3}), $\partial_t^n  e^{a(t,s)}$~\cite{51taylor} is possible to be represented as
\begin{equation} \label{anreap} \begin{array}{ll}
\partial_t^n  e^{a(t,s)} = \frac{1}{2 \pi i} \int_{\Gamma} \lambda^n e^{\lambda} (\lambda-a(t,s))^{-1} ~ d \lambda, 
\end{array} \end{equation} 
for a certain $\kappa$, where $ \lambda^n e^{\lambda}$ does not hold any singularity for any finite $\lambda$.
Following the standard theory of evolution equation, 
\[  \begin{array}{ll}
 \| \partial_t^n  e^{a(t,s)}   \| \le \frac{C_{\theta,n}}{ \pi (t \sin \theta)^n}
 \end{array} \] 
is true for a certain constant $C_{\theta,n}$  ($n = 0,1,2,\cdots$), where $\theta \in (0 \pi/2)$ and $|\arg t| < \pi/2$ are satisfied (for the detail, e.g., see \cite{79tanabe}).
It follows that
\begin{equation} \label{leiq} \begin{array}{ll}
{ \displaystyle  \lim_{t \to +0} } \sup t^n \| \partial_t^n  e^{a(t,s)} \|
\le {\displaystyle \lim_{t \to +0}} \sup t^n   \frac{C_{\theta,n}}{\pi (t \sin \theta)^n}  < \infty.
\end{array} \end{equation} 
Consequently, for $|z-t|<t \sin \theta$, the power series expansion
\[ \begin{array}{ll}
 \sum_{n=0}^{\infty} \frac{(z-t)^n}{n !}   \partial_t^n e^{a(t,s)} 
 \end{array} \]
is uniformly convergent in a wider sense.
Therefore $e^{a(t,s)}$ is holomorphic.
\end{proof}

\begin{theorem} \label{reprr}
For $u_s \in X$ there exists a unique solution $u(t) \in C([-T,T];X)$ of \eqref{homoporo} with a convergent power series representation:
\begin{equation} \label{dairep} \begin{array}{ll} 
 u(t) = U(t,s) u_s =  ( e^{a(t,s)}  - \kappa  I ) u_s = \left( \sum_{n=0}^{\infty} \frac{ a(t,s)^n}{n!}  - \kappa I \right)  u_s,
\end{array} \end{equation}
where $\kappa$ is a certain complex number.
\end{theorem}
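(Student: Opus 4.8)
The plan is to produce the solution explicitly as $u(t) = U(t,s)\,u_s$ and then to establish existence, the power series form, and uniqueness separately. For existence I would set $u(t) := U(t,s)\,u_s$; its continuity in $t$ on $[-T,T]$ is immediate from the strong continuity of the evolution family assumed in Sec.~\ref{secevolv}, and the initial condition $u(s) = U(s,s)\,u_s = u_s$ follows from \eqref{sg2}. That $u(t)$ solves $\partial_t u = A(t)\,u$ is precisely the identity $\partial_t U(t,s)\,u_s = A(t)\,U(t,s)\,u_s$ already derived from Corollary~\ref{transform}; for $u_s \in Y$ this holds literally, and for general $u_s \in X$ it is read as a mild solution, obtained from the density of $Y$ in $X$ together with the uniform bound \eqref{qb}.

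The power series representation \eqref{dairep} is then harvested directly from Corollary~\ref{transform}: since $a(t,s) = {\rm Log}(U(t,s)+\kappa I)$ is bounded on $X$ (Lemma~\ref{lem3}), its exponential is the norm-convergent series $e^{a(t,s)} = \sum_{n=0}^{\infty} a(t,s)^n/n!$, and the Dunford calculus identifies $e^{a(t,s)} = U(t,s) + \kappa I$. Subtracting $\kappa I$ yields $U(t,s)\,u_s = (e^{a(t,s)} - \kappa I)\,u_s$, which is exactly \eqref{dairep}. No separate convergence estimate is required at this stage, because the series is merely that of a bounded operator.

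For uniqueness I would exploit the invertibility of the family. Given two solutions, their difference $w(t)$ solves the homogeneous equation with $w(s)=0$. Differentiating the identity $U(s,t)\,U(t,s) = I$ from \eqref{sg3} and inserting $\partial_t U(t,s) = A(t)\,U(t,s)$ gives $\partial_t U(s,t) = -U(s,t)\,A(t)$; the product rule then produces the cancellation $\partial_t \{ U(s,t)\,w(t) \} = -U(s,t)\,A(t)\,w(t) + U(s,t)\,A(t)\,w(t) = 0$, so $U(s,t)\,w(t)$ is constant and equal to its value $w(s) = 0$ at $t=s$. Applying $U(t,s)$ recovers $w(t) = 0$, i.e. the two solutions coincide.

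The main obstacle is the regularity bookkeeping that underlies this last step: for $u_s \in X \setminus Y$ the solution need not lie in $Y$, so the symbol $A(t)\,u(t)$ and the product rule for $U(s,t)\,w(t)$ must be interpreted through the logarithmic representation \eqref{logex} rather than applied naively. I expect the cleanest route is to transfer the entire argument to the rewritten problem \eqref{reweq}, whose governing object is the holomorphic family $e^{a(t,s)}$ of Theorem~\ref{hols} built from the bounded operator $a(t,s)$; uniqueness there is controlled by bounded-operator (analytic semigroup) estimates, and the one-to-one correspondence $v(t,s) = e^{a(t,s)}\,u_s = (U(t,s)+\kappa I)\,u_s$ transports it back to \eqref{homoporo}. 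Justifying the interchange of the weak $t$-derivative with composition by $U(s,t)$ — exactly as was done for the Dunford integrand in the proof of Theorem~\ref{thm1} — is where the commutation hypothesis and the boundedness of $(I+\kappa U(s,t))^{-1}$ do the essential work.
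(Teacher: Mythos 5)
Your proposal is correct in substance, and on the part of the theorem carrying the actual content --- the representation \eqref{dairep} --- it coincides with the paper's proof: both rest on Lemma~\ref{lem3} (boundedness of $a(t,s)$ on $X$), the norm-convergent series \eqref{convp}, and the Dunford-calculus identity $e^{a(t,s)} = U(t,s)+\kappa I$ from Corollary~\ref{transform}. Where you genuinely diverge is in the well-posedness bookkeeping. The paper dispatches it in one sentence (``the unique existence follows from the assumption for $A(t)$''), i.e.\ since $A(t)$ is hypothesized in \eqref{homoporo} to be the infinitesimal generator of the invertible family $U(t,s)$, existence and uniqueness are simply inherited from standard generation theory, after which the solution is written as $u(t) = (I+\kappa U(s,t))^{-1} v(t,s)$ through the rewritten problem \eqref{reweq}. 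You instead construct $u(t)=U(t,s)u_s$ directly and prove uniqueness by hand: differentiating \eqref{sg3} to get $\partial_t U(s,t) = -U(s,t)A(t)$ and concluding that $U(s,t)w(t)$ is constant for the difference $w$ of two solutions. This is exactly the classical mechanism underlying the theory the paper cites, and it is the one place where two-sided invertibility of the family is genuinely used, so your route is more self-contained and more explicit about where the hypotheses enter; what it costs is the domain bookkeeping you correctly flag (the product rule needs $w(t)\in Y$, and for $u_s\in X\setminus Y$ a density/approximation step, which the paper glosses over entirely). One caveat on your fallback: transferring uniqueness to \eqref{reweq} is legitimate, since $v=(U(t,s)+\kappa I)u_s$ is a boundedly invertible correspondence between the two problems, but calling the resulting problem one of ``bounded-operator estimates'' is optimistic --- only $a(t,s)$ is bounded, while the coefficient $\partial_t a(t,s) = (I+\kappa U(s,t))^{-1}A(t)$ is in general unbounded, so the rewritten problem is not a bounded-generator ODE and its uniqueness still needs an argument of the same kind as your invertibility computation.
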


\begin{proof}
The unique existence follows from the assumption for $A(t)$.
$e^{a(t,s)}$ is holomorphic function (Theorem~\ref{hols}) with the convergent power series representation (Eq.~\eqref{convp}).
The solution of the original Cauchy problem is obtained as
 \[ \begin{array}{ll}
  u(t) =   (I+ \kappa U(s,t))^{-1}v(t,s) =  (I+ \kappa U(s,t))^{-1} \sum_{n=0}^{\infty} \frac{ a(t,s)^n}{n!} u_s
  \end{array} \]
for the initial value $u_s \in X$.
Note that $A(t)$ is not assumed to be a generator of analytic evolution family, but only a generator of invertible evolution family.
\end{proof}

For $I_{\lambda}$ denoting the resolvent operator of $A(t)$, the evolution operator defined by the Hille-Yosida approximation is written by
\[ \begin{array}{ll}
u(t) = {\displaystyle \lim_{\lambda \to 0} } \exp ( \int_s^t I_{\lambda} A(\tau) ~d \tau ) u_s,
\end{array} \] 
so that more informative representation is provided by Theorem~\ref{reprr} compared to the standard theory based on the Hille-Yosida theorem.

\subsection{Non-autonomous case}
Series representation in autonomous part leads to the enhancement of the solvability.
Let $Y$ be a dense subspace of $X$ permitting the representation shown in Eq.~\eqref{logex},  and $u_s$ is an element of $X$.
Let us consider non-autonomous Cauchy problem
\begin{equation} \label{origih} \left\{  \begin{array}{ll}
\partial_t u(t)  = A(t)  u(t) + f(t) \vspace{2.5mm} \\
u(s) = u_s
\end{array} \right. \end{equation}
in $X$, where $A(t) \in G(X):Y \to X$ is assumed to be the infinitesimal generator $U(t,s)$, $f \in L^1(-T,T;X)$ is locally H\"older continuous on $[-T,T]$
\[ \begin{array}{ll} 
\| f(t) - f(s) \| \le C_{H} |t-s|^{\gamma}
\end{array} \]
for a certain positive constant $C_{H}$, $\gamma \le 1$ and $-T \le t,s \le T$. 
The solution of non-autonomous problem does not necessarily exist in such a setting (in general, $f \in C([-T,T];X)$ is necessary).

\begin{theorem} \label{thm-inh}
Let $f \in L^1(-T,T;X)$ be locally H\"older continuous on $[-T,T]$.
For $u_s \in X$ there exists a unique solution $u(t) \in C([-T,T];X)$ for \eqref{origih} such that
\[ \begin{array}{ll} 
 u(t) = [  \sum_{n=0}^{\infty} \frac{ a(t,s)^n}{n!}  - \kappa I ]  u_s +  \int_s^t [ \sum_{n=0}^{\infty} \frac{ a(t,\tau)^n}{n!}  - \kappa I ]  f(\tau) d\tau
\end{array} \]
using a certain complex number $\kappa$.
\end{theorem}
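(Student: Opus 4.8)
The plan is to recognize the asserted formula as the Duhamel (variation-of-parameters) representation in disguise. Recall from Corollary~\ref{transform} that $\sum_{n=0}^{\infty} a(t,\tau)^n/n! = e^{a(t,\tau)} = U(t,\tau)+\kappa I$, so the bracketed operator $\sum_{n=0}^{\infty} a(t,\tau)^n/n! - \kappa I$ equals $U(t,\tau)$ exactly. Hence the claimed solution is
\[
u(t) = U(t,s)\,u_s + \int_s^t U(t,\tau)\, f(\tau)\, d\tau,
\]
and the whole task reduces to verifying that this classical mild-solution formula genuinely solves \eqref{origih}, is unique in $C([-T,T];X)$, and then to re-expanding each $U(t,\tau)$ as its convergent power series.

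The homogeneous part is immediate: by Theorem~\ref{reprr} the map $t \mapsto U(t,s)u_s$ is the unique solution of the homogeneous problem with datum $u_s$, already written as $\big(\sum_n a(t,s)^n/n! - \kappa I\big)u_s$. It therefore suffices to treat $w(t) := \int_s^t U(t,\tau) f(\tau)\,d\tau$, showing $w(s)=0$, $w \in C([-T,T];X)$, and $\partial_t w(t) = A(t) w(t) + f(t)$. Formal differentiation under the integral via the Leibniz rule produces a boundary contribution $U(t,t)f(t) = f(t)$ (using $U(t,t)=I$, i.e. $e^{a(t,t)}=I+\kappa I$) together with the interior term $\int_s^t \partial_t U(t,\tau) f(\tau)\,d\tau = \int_s^t A(t) U(t,\tau) f(\tau)\,d\tau = A(t) w(t)$, where the last two equalities use $\partial_t U(t,\tau) = A(t)U(t,\tau)$ from \eqref{de-group} and the permissibility of extracting the closed operator $A(t)$ from the integral.

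The hard part is justifying this differentiation and the extraction of $A(t)$ under merely H\"older-continuous $f$ --- precisely the classical parabolic/maximal-regularity obstacle. The integrand $A(t)U(t,\tau)f(\tau) = \partial_t U(t,\tau) f(\tau)$ carries a non-integrable singularity of order $(t-\tau)^{-1}$ as $\tau \to t$, quantified by the analytic estimate $\| \partial_t e^{a(t,\tau)} \| \le C_\theta/((t-\tau)\sin\theta)$ established in the proof of Theorem~\ref{hols}. To regularize I would subtract and add $f(t)$, writing
\[
\int_s^t A(t)U(t,\tau) f(\tau)\,d\tau = \int_s^t A(t)U(t,\tau)\,[f(\tau)-f(t)]\,d\tau + \int_s^t A(t)U(t,\tau)\,d\tau\; f(t).
\]
The first integrand is then bounded by $C_\theta C_H (t-\tau)^{\gamma-1}$, which is integrable since $\gamma>0$, while the second term is handled by relating $\partial_t U(t,\tau)$ to $-\partial_\tau U(t,\tau)$ and integrating by parts, the commutation assumptions in force (Theorem~\ref{thm1}) keeping the non-autonomous bookkeeping tractable. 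This is exactly the scheme of Tanabe~\cite{79tanabe} and Pazy~\cite{83pazy}, and the holomorphy of $e^{a(t,s)}$ (Theorem~\ref{hols}) is what licenses those estimates in the present framework; I expect this step to be the crux of the argument.

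Finally, uniqueness follows from the assumption that $A(t)$ is the infinitesimal generator of $U(t,s)$: the difference of two solutions solves the homogeneous problem with zero datum, whose only solution is $0$ by Theorem~\ref{reprr}. Continuity $u \in C([-T,T];X)$ is inherited from the strong continuity of $U(t,s)$ together with the $L^1$-integrability of $f$. Substituting the convergent series $U(t,\tau) = \sum_{n=0}^{\infty} a(t,\tau)^n/n! - \kappa I$ of Corollary~\ref{transform} into both terms of the Duhamel formula then yields the stated representation.
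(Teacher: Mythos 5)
Your proposal is correct and follows essentially the same route as the paper: the paper likewise reduces the stated series formula to the Duhamel representation $u(t)=U(t,s)u_s+\int_s^t U(t,\tau)f(\tau)\,d\tau$ (via $e^{a(t,\tau)}-\kappa I=U(t,\tau)$), then justifies applying $A(t)$ under merely H\"older-continuous $f$ by the same splitting $f(\tau)=(f(\tau)-f(t))+f(t)$, the singular estimate of order $(t-\tau)^{\gamma-1}$ coming from Theorem~\ref{hols}/Eq.~\eqref{leiq}, the identity $\partial_\tau U(t,\tau)=-A(\tau)U(t,\tau)$ for the boundary terms, and the closedness of $A(t)$. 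The only cosmetic difference is that the paper regularizes with an $\epsilon$-truncated integral $\int_s^{t+\epsilon}$ and passes to the limit $\epsilon\to 0$, whereas you differentiate directly and regularize in place; both are the classical Tanabe--Pazy scheme you cite.
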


\begin{proof}
Let us start with cases with $f \in C([-T,T];X)$.
The unique existence follows from the standard theory of evolution equation.
The representation follows from that of $U(t,s)$ and the Duhamel's principle
\begin{equation}  \begin{array}{ll}
 u(t,s)  =   U(t,s) u_s + \int_s^tU(t,\tau) f(\tau) d\tau \vspace{1.5mm}\\
 =   ( e^{a(t,s)} - \kappa I )  u_s + \int_s^t  [  e^{a(t,\tau)} - \kappa I ]  f(\tau) d\tau,
\end{array}  \end{equation}
where the convergent power series representation of $e^{a(t,s)}$ is valid (cf.~Eq.~\eqref{convp}). 

Next let us consider cases with the locally H\"older continuous $f(t)$.
According to the linearity of Eq.~\eqref{origih}, it is sufficient to consider the inhomogeneous term.
For $\epsilon$ satisfying $0 < \epsilon << T$,
\[   \begin{array}{ll}
  \int_s^{t+\epsilon} [e^{a(t,\tau)} -\kappa I] f(\tau) d\tau  
  ~\to~  
   \int_s^{t} [e^{a(t,\tau)} -\kappa I] f(\tau) d\tau
\end{array}  \]
is true by taking $\epsilon \to 0$.
On the other hand,
\begin{equation} \label{conveq}  \begin{array}{ll}
   A(t)  \int_s^{t+\epsilon} [e^{a(t,\tau)} -\kappa I] f(\tau) d\tau 
=  \int_s^{t+\epsilon} A(t) U(t,\tau) f(\tau) d\tau  \vspace{1.5mm}\\
=   \int_s^{t+\epsilon} A(t)  U(t,\tau)( f(\tau) - f(t)) d\tau  + \int_s^{t+\epsilon} A(t)  U(t,\tau) f(t) d\tau  \vspace{1.5mm}\\
=   \int_s^{t+\epsilon} A(t)  U(t,\tau) ( f(\tau) - f(t)) d\tau  - \int_s^{t+\epsilon}  \partial_{\tau} U(t,\tau) f(t) d\tau   \vspace{1.5mm}\\
=   \int_s^{t+\epsilon} A(t)  U(t,\tau) ( f(\tau) - f(t)) d\tau  -  U(t,t+\epsilon) f(t) + U(t,s) f(t)  \vspace{1.5mm}\\
=   \int_s^{t+\epsilon}  (1+\kappa U(s,t))\partial_{t} a(t,s) 
 [e^{a(t,\tau)} -\kappa I]
 ( f(\tau) - f(t)) d\tau  \vspace{1.5mm}\\
\quad -  U(t,t+\epsilon) f(t) + U(t,s)f(t), 
\end{array}  \end{equation}
where $\partial_{\tau} U(t,\tau) =  -A(\tau)  U(t,\tau)$ is utilized. 
The H\"older continuity and Eq.~\eqref{leiq} lead to the strong convergence of the right hand of Eq.~\eqref{conveq}:
\[ \begin{array}{ll} 
 A(t)  \int_s^{t+\epsilon} [e^{a(t,\tau)} - \kappa I] f(\tau) d\tau   \vspace{1.5mm}\\
  \to
 \quad  \int_s^t (1+\kappa U(s,t)) (\partial_{t} a(t,s)) [e^{a(t,\tau)} -\kappa I]  (f(\tau) - f(t)) d\tau 
  + ( U(t,s) - I) f(t)
 \end{array}  \]
(due to $\epsilon \to 0$) for $f \in L^1(0,T;X)$. 
$A(t)$ is assumed to be an infinitesimal generator, so that $A(t)$ is a closed operator from $Y$ to $X$.
It follows that
\[   \begin{array}{ll}
  \int_s^{t} [e^{a(t,\tau)} -\kappa I] f(\tau) d\tau  \in Y
\end{array}  \]
and
\[ \begin{array}{ll} 
 A(t)  \int_s^{t} [e^{a(t,\tau)} -\kappa I] f(\tau) d\tau   \vspace{1.5mm}\\
  =
  \int_s^t (1+ \kappa U(s,t)) (\partial_{t} a(t,s)) [e^{a(t,\tau)} -\kappa I]  (f(\tau) - f(t)) d\tau 
  + ( U(t,s) - I) f(t)  \in X.
 \end{array}  \]
The right hand side of this equation is strongly continuous on $[-T,T]$.
As a result,
  \[ \begin{array}{ll} 
  \partial_{t}  \int_s^{t+\epsilon} [e^{a(t,\tau)} -\kappa I] f(\tau) d\tau \vspace{1.5mm}\\
  =  [ e^{a(t,t+\epsilon)} -\kappa I ] f(t+\epsilon) +   \int_s^{t+\epsilon}  (\partial_{t} a(t,\tau)) e^{a(t,\tau)}  f(\tau) d\tau \vspace{1.5mm}\\
  \to  \quad f(t) + \int_s^{t} (1+\kappa U(\tau,t))^{-1}  A(t)  (U(t,\tau)+\kappa I)  f(\tau) d\tau \vspace{1.5mm}\\
 \qquad  =   f(t) +  \int_s^{t} A(t) U(t,\tau)   f(\tau) d\tau \vspace{1.5mm}\\
 \qquad  =   f(t) + A(t)  \int_s^{t} [e^{a(t,\tau)} -\kappa I] f(\tau) d\tau.
 \end{array}  \]
We see that $ \int_s^{t} [e^{a(t,\tau)} -\kappa I] f(\tau) d\tau$ satisfies Eq.~\eqref{origih},  and that it is sufficient to assume $f \in L^1(0,T;X)$ as H\"older continuous. 
\end{proof}

This result should be compared to the standard theory of evolution equations in which the inhomogeneous term $f$ is assumed to be continuous on $[-T,T]$.

If the inhomogeneous term $f \in L^p([0,T];X)$ is further assumed to be satisfied for $1< p < \infty$, and $Y = D(A(t)) =D(A(0))$ and $A(\cdot) \in C([0,T],{\mathcal L}(Y,X))$, Eq.~\eqref{reweq} with such an inhomogeneous term corresponds to the equation exhibiting the maximal regularity of type $L^p$~\cite{01pruess}.

\section{Concluding remark}
As for the applicability of the theory, the conditions to obtain the logarithmic representation (conditions shown in Sec.~\ref{tp-group}) are not so restrictive; indeed, they can be satisfied by $C_0$-groups generated by $t$-independent infinitesimal generators. 
The most restrictive condition to obtain Eq.~\eqref{logex} is the commutation between $A(t)$ and $U(t,s)$.
Such a commutation is trivially satisfied by $t$-independent $A(t) = A$, and also satisfied when the variable $t$ is separable  (i.e., for an integrable function $g(t)$, $A(t) = g(t) A$).
In this sense the operators specified in Theorem~\ref{thm1} correspond to a moderate generalization of $t$-independent infinitesimal generators.

\end{document}